\newcommand{\cF}{\mathcal{F}}
\newcommand{\bI}{\mathbb{I}}
\newcommand{\bL}{\mathbb{L}}
\newcommand{\gopt}{g_{\text{opt}}}
\newtheorem*{thm}{Theorem}
\newtheorem*{lemma}{Lemma}
\begin{document}
\title[]{Optimal Jittered Sampling for\\ two points in the unit square}
\keywords{Jittered Sampling, calculus of variations, numerical integration, quasirandom point sets.}
\subjclass[2010]{65C05 (primary) and 93E20 (secondary)}

\author[]{Florian Pausinger}
\address[Florian Pausinger]{Zentrum Mathematik (M10), TU Munich, Germany}
\email{florian.pausinger@ma.tum.de}

\author[]{Manas Rachh}
\address[Manas Rachh]{Applied Mathematics Program, Yale University, New Haven, CT 06510, USA}
\email{manas.rachh@yale.edu}

\author[]{Stefan Steinerberger}
\address[Stefan Steinerberger]{Department of Mathematics, Yale University, New Haven, CT 06510, USA}
\email{stefan.steinerberger@yale.edu}

\begin{abstract} Jittered Sampling is a refinement of the classical Monte Carlo 
sampling method. 
Instead of picking $n$ points randomly from $[0,1]^2$, one partitions the unit 
square into $n$ regions of equal measure and then chooses a point randomly from
each partition. 
Currently, no good rules for how to partition the space are available.
In this paper, we present a solution for the  special case of 
subdividing the unit square by a decreasing function into two regions so 
as to minimize the expected squared $\mathcal{L}_2-$discrepancy. 
The optimal partitions 
are given by a \textit{highly} nonlinear integral equation for which we
determine an approximate solution. In particular, there is a break of symmetry
and the optimal partition is \textit{not} into two sets of equal measure.
We hope this stimulates further interest in the construction of good partitions. 
\end{abstract}

\maketitle
\vspace{-15pt}
\section{Introduction and Statement of Result}
\subsection{Jittered Sampling.}
Jittered Sampling is a mixture of classical Monte Carlo and 
sampling along grid-type structures: a standard approach is to partition 
$[0,1]^2$ into $m^2$ axis aligned cubes of equal measure and placing a random 
point inside each of the $N = m^2$ cubes, see Figure \ref{fig:def}. 
This idea seems to date back to a paper of Bellhouse \cite{bellhouse} from 1981
and makes a reappearance 
in computer graphics in a 1984 paper of Cook, Porter \& Carpenter \cite{port} 
by the name of \textit{Jittered Sampling} (see also \cite{dob}). 
Bounds on the discrepancy are due to Beck \cite{beck87}; we also refer to the 
book Beck \& Chen \cite{beckchen},  
the exposition in Chazelle \cite{chazelle}, and the
recent quantified version of the first and third author \cite{Jittered}. 
Deterministic lower bounds are derived in Chen \& Travaglini \cite{chen}.
\begin{center}
\begin{figure}[h!]
\centering
\begin{tikzpicture}[scale=0.65]
\draw[step=1cm,gray,very thin] (0,0) grid (5,5);
\draw[step=1cm,gray,very thin] (8,0) grid (13,5);
\node at (0,0) {$\bullet$}; 
\node at (0,1) {$\bullet$}; 
\node at (0,2) {$\bullet$}; 
\node at (0,3) {$\bullet$}; 
\node at (0,4) {$\bullet$}; 

\node at (1,0) {$\bullet$}; 
\node at (1,1) {$\bullet$}; 
\node at (1,2) {$\bullet$}; 
\node at (1,3) {$\bullet$}; 
\node at (1,4) {$\bullet$}; 

\node at (2,0) {$\bullet$}; 
\node at (2,1) {$\bullet$}; 
\node at (2,2) {$\bullet$}; 
\node at (2,3) {$\bullet$}; 
\node at (2,4) {$\bullet$}; 

\node at (3,0) {$\bullet$}; 
\node at (3,1) {$\bullet$}; 
\node at (3,2) {$\bullet$}; 
\node at (3,3) {$\bullet$}; 
\node at (3,4) {$\bullet$}; 

\node at (4,0) {$\bullet$}; 
\node at (4,1) {$\bullet$}; 
\node at (4,2) {$\bullet$}; 
\node at (4,3) {$\bullet$}; 
\node at (4,4) {$\bullet$}; 

\node at (8.31,0.23) {$\bullet$}; 
\node at (8.7,1.5) {$\bullet$}; 
\node at (8.2,2.7) {$\bullet$}; 
\node at (8.3,3.34) {$\bullet$}; 
\node at (8.9,4.29) {$\bullet$}; 

\node at (9.3,0.74) {$\bullet$}; 
\node at (9.55,1.56) {$\bullet$}; 
\node at (9.1,2.83) {$\bullet$}; 
\node at (9.65,3.42) {$\bullet$}; 
\node at (9.45,4.43) {$\bullet$}; 

\node at (10.34,0.41) {$\bullet$}; 
\node at (10.67,1.66) {$\bullet$}; 
\node at (10.52,2.42) {$\bullet$}; 
\node at (10.67,3.37) {$\bullet$}; 
\node at (10.52,4.12) {$\bullet$}; 
\node at (11.5,0.82) {$\bullet$}; 
\node at (11.7,1.58) {$\bullet$}; 
\node at (11.3,2.21) {$\bullet$}; 
\node at (11.9,3.14) {$\bullet$}; 
\node at (11.7,4.72) {$\bullet$}; 
\node at (12.23,0.63) {$\bullet$}; 
\node at (12.56,1.61) {$\bullet$}; 
\node at (12.89,2.25) {$\bullet$}; 
\node at (12.17,3.57) {$\bullet$}; 
\node at (12.67,4.19) {$\bullet$}; 
\end{tikzpicture}
\caption{The regular grid and a point set obtained by Jittered Sampling.} \label{fig:def}
\end{figure}
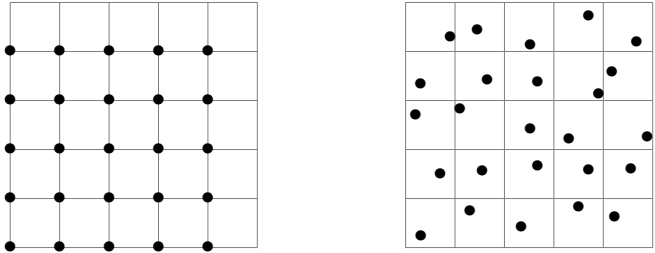
\end{center}
\vspace{-5pt}
\begin{quote}
\textbf{Main Problem.} For any $N \in \mathbb{N}$, $d \geq 2$, 
which partition of the unit cube $[0,1]^d$ into $N$ sets gives, in expectation, 
the best result for the Jittered Sampling construction? 
What partition of space should be used?
\end{quote}
\vspace{10pt}
We will study this question restricted to $d=2$. Clearly, `best' requires a 
quantitative measure of equidistribution:  
we will work with the squared $\mathcal{L}_2-$discrepancy, which,
for a given set $P = \left\{p_1, \dots, p_N\right\}$ of $N$ points in 
$[0,1]^2$ is given by
$$\mathcal{L}^2_2(P) :=  \int_{[0,1]^2}
{ \left| \frac{ \#P \cap ([0, x] \times [0,y]) }{\#P} -  x y   \right|^2 dx dy}.$$
A result of the first and third author \cite{Jittered} shows that 
\textit{any} decomposition into $N$ sets of equal measure \textit{always} 
yields a smaller expected squared $\mathcal{L}_2-$discrepancy than
$N$ completely randomly chosen points: even the most primitive Jittered Sampling 
construction is better than Monte Carlo.
However, currently known quantitative bounds \cite{Jittered} do not imply
any effective improvement for $N \lesssim (2d)^{2d}$ points. 
The motivation of our paper is to gain a better understanding of Jittered
Sampling.
A better quantitative control is desirable since 
it could provide a possible way towards improving
bounds on the inverse of the star-discrepancy (see \cite{aisti, dick, hnww}).

\subsection{Result.} 
The purpose of this short note is to initiate the study 
of explicit effective Jittered Sampling constructions by 
obtaining a complete solution for the $d=2, n=2$ case within a natural family 
of domain partitions.
We first consider some natural examples (see Figure \ref{fig:examples1}) of partitions into sets of equal measure and compute their expected squared 
$\mathcal{L}_2-$discrepancy (for details on the computation, see the proof).
\begin{center}
\begin{figure}[h!]
\begin{tikzpicture}[scale=2]
\draw [thick] (0,0) -- (1,0);
\draw [thick] (1,1) -- (1,0);
\draw [thick] (0,1) -- (1,1);
\draw [thick] (0,0) -- (0,1);
\node at (0.5, 0.5) {Monte Carlo};
\node at (0.45, -0.2) {0.0694};
\draw [thick] (2,0) -- (3,0);
\draw [thick] (3,1) -- (3,0);
\draw [thick] (2,1) -- (3,1);
\draw [thick] (2,0) -- (2,1);
\draw [ultra thick] (6,1) -- (7,0);
\node at (2.45, -0.2) {0.0638};
\draw [thick] (4,0) -- (5,0);
\draw [thick] (5,1) -- (5,0);
\draw [thick] (4,1) -- (5,1);
\draw [thick] (4,0) -- (4,1);
\draw [ultra thick] (2,0) -- (3,1);
\node at (4.45, -0.2) {0.0555};
\draw [thick] (6,0) -- (7,0);
\draw [thick] (7,1) -- (7,0);
\draw [thick] (6,1) -- (7,1);
\draw [thick] (6,0) -- (6,1);
\draw [ultra thick] (4,0.5) -- (5,0.5);
\node at (6.45, -0.2) {0.05};
\end{tikzpicture}
\caption{Different subdivisons and their expected squared $\mathcal{L}^2-$discrepancy.} \label{fig:examples1}
\end{figure}
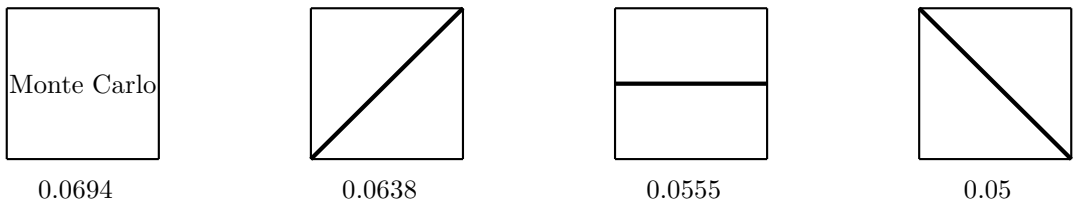
\end{center}
\vspace{-15pt}
These examples suggest that a certain type of symmetry along the $y=x$ diagonal 
seems to be helpful. 
Of course, this still leaves a very large number of shapes that could 
potentially be tested. 
Two natural examples are a quarter disk and a polyhedral domain; see Figure \ref{fig:examples2}.
\begin{center}
\begin{figure}[h!]
\label{sym}
\begin{tikzpicture}[scale=2.7]
\draw[thick] (0,0)--(0,1)--(1,1)--(1,0)--(0,0);
\draw[thick] (0,0.823)--(0.608,0.608)--(0.823,0);
\node at (0,0.823) {$\bullet$}; 
\node at (0.608,0.60) {$\bullet$}; 
\node at (0.823,0) {$\bullet$};  
\node at (0.45, -0.2) {0.0470};
\draw[thick] (-2,0)--(-1,0)--(-1,1)--(-2,1)--(-2,0);
\node at (-1.55, -0.2) {0.0471};
\draw [thick,domain=0:90] plot ({0.8*cos(\x)-2}, {0.8*sin(\x)});
\end{tikzpicture}
\caption{A quarter disk and lines connecting $(0,0.792), (0.63, 0.63)$ and $(0.792, 0)$.} \label{fig:examples2}
\end{figure}
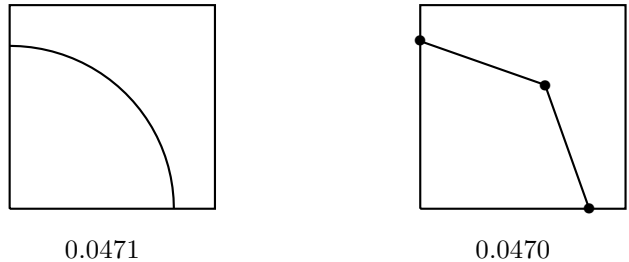
\end{center}
\vspace{-10pt}
We will  now restrict ourselves to the study of partitions of $[0,1]^2 = \Omega \cup \left( [0,1]^2 \setminus \Omega \right),$
where
$$ \Omega = \left\{(x,y) \in [0,1]^2: y \leq g(x) \right\}$$
for some monotonically decreasing function $g$ whose graph $\left\{(x, g(x)): 0 \leq x \leq 1\right\}$ is assumed to be symmetric around the line $y=x$
and splits the unit square into two regions with areas $p$ and $1-p$.
Somewhat to our surprise, it is actually possible to determine a highly 
nonlinear integral equation that any optimal function $g(x)$ has to satisfy. 
The equation is so nonlinear that we would not know of any way to show 
existence of solution, except that it arises as the minimum of a variational
problem for which compactness methods can be used.

\begin{thm}  \label{thm1}
Any optimal monotonically decreasing function $g(x)$ whose
graph is symmetric about $y=x$ satisfies, for $0\leq x\leq g^{-1}(0)$,
\begin{align*}
 &\left(1-2p-4xg(x)\right)\left(1-g(x)\right) +(4p-1)x \left(1-g(x)^2\right) 
 - 4\int_{g(x)}^{g^{-1}(0)}{ (1-y)g^{}(y)dy}  \\
&+ g'(x) \left(  
\left(1-2p-4xg(x)\right)\left(1-x\right) +(4p-1)g(x) \left(1-x^2\right) 
- 4\int_{x}^{g^{-1}(0)}{ (1-y)g(y)  dy} \right) = 0.
\end{align*}
\end{thm}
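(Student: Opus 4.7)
The plan is to express $E(g) := \mathbb{E}[\mathcal{L}_2^2(P)]$ as an explicit functional of $g$, compute its first variation, and impose the optimality condition under symmetry-preserving perturbations. Warnock's identity for two points $P = \{(x_1,y_1),(x_2,y_2)\}$ reads
\[
\mathcal{L}_2^2(P) = \frac{1}{4}\sum_{i,j=1}^{2}\bigl(1-\max(x_i,x_j)\bigr)\bigl(1-\max(y_i,y_j)\bigr) - \frac{1}{4}\sum_{i=1}^{2}(1-x_i^2)(1-y_i^2) + \tfrac{1}{9}.
\]
Taking expectations with $(x_1,y_1)$ uniform on $\Omega_1 := \Omega$ (area $p$) and $(x_2,y_2)$ uniform on $\Omega_2 := [0,1]^2 \setminus \Omega$ (area $1-p$), and decomposing the cross term via $1-\max(x_1,x_2) = \int_0^1 \mathbf{1}_{t \ge x_1}\mathbf{1}_{t \ge x_2}\,dt$, one obtains
\[
E(g) = \frac{B_1}{4p} + \frac{B_2}{4(1-p)} + \frac{D}{2p(1-p)} - \frac{C_1}{4p} - \frac{C_2}{4(1-p)} + \tfrac{1}{9},
\]
where $B_j := \int_{\Omega_j}(1-x)(1-y)\,dA$, $C_j := \int_{\Omega_j}(1-x^2)(1-y^2)\,dA$, $D := \iint A_1(t,s)A_2(t,s)\,dt\,ds$, and $A_j(t,s) := |\Omega_j \cap [0,t] \times [0,s]|$.

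I would then compute the first variation under $g \mapsto g + \varepsilon \eta$. Setting $a := g^{-1}(0)$, the boundary contribution at $x = a$ vanishes because $g(a) = 0$, giving
\[
\delta p = \int_0^a \eta(x)\,dx, \qquad \delta\!\int_{\Omega_1}\! f(x,y)\,dA = \int_0^a f(x,g(x))\,\eta(x)\,dx,
\]
with opposite-signed analogues over $\Omega_2$. For the nonlocal piece one computes $\delta A_1(t,s) = \int_{g(s)}^{\min(t,a)}\eta(x)\,dx$ and then swaps the order of integration to push $\eta$ outside:
\[
\delta D = \int_0^a \eta(x)\left[\int_x^1\!\!\int_{g(x)}^1 \bigl(ts - 2A_1(t,s)\bigr)\,ds\,dt\right] dx.
\]
Assembling all contributions yields $\delta E = \int_0^a \eta(x)\,K(x)\,dx$ for an explicit function $K$ depending on $g$, $p$, and the integrals above.

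The requirement that the graph of $g$ be symmetric about $y = x$ forces $g \circ g = \mathrm{id}$ on $[0,a]$, so admissible perturbations must satisfy $\eta(g(x)) = -g'(g(x))\,\eta(x)$. Splitting $[0,a]$ about the fixed point $x^{\star} = g(x^{\star})$ and using the change of variable $u = g(x)$ together with the identity $g'(x)\,g'(g(x)) = 1$ (obtained by differentiating $g \circ g = \mathrm{id}$) to rewrite the integral over $[0, x^\star]$, the constrained variation reduces to
\[
\delta E = \int_{x^\star}^{a} \eta(u)\,\bigl[K(u) + K(g(u))\bigr]\,du,
\]
so the Euler--Lagrange condition becomes $K(u) + K(g(u)) = 0$ on $[0,a]$. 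To match this with the displayed equation I would (i) use $g(g(x)) = x$ to simplify the reflected terms, (ii) collapse the double integral $\iint_{t\ge x,\, s\ge g(x)} A_1(t,s)\,ds\,dt$ into the one-dimensional $\int_{g(x)}^{a}(1-y)\,g(y)\,dy$ via case analysis on the four subregions of $\{t \ge x,\, s \ge g(x)\}$ where $A_1$ has distinct analytic expressions, together with the involution identity $\int_0^{x} g(y)\,dy = x\,g(x) + \int_{g(x)}^{a} g(y)\,dy$, and (iii) observe the factorisation $K(u) + K(g(u)) = \tfrac{1 + g'(u)}{g'(u)}\bigl[G(u) + g'(u)\,G(g(u))\bigr]$, so that the condition $K + K\circ g = 0$ is equivalent to the displayed $G(x) + g'(x)\,G(g(x)) = 0$.

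The main technical obstacle is the cross term: reducing the inner double integral to the single integral $\int_{g(x)}^{a}(1-y)g(y)\,dy$ demands careful tracking of the four subregions of $[0,1]^2$ separated by $s = a$, $t = a$ and the graph $t = g(s)$, together with repeated use of the involution identity. Once this is done, the remaining algebra --- combining the $1/p$ and $1/(1-p)$ prefactors into the coefficients $(1 - 2p - 4xg(x))$ and $(4p-1)$, and rewriting $\int_{\Omega_2} f = \int_{[0,1]^2} f - \int_{\Omega_1} f$ --- is routine.
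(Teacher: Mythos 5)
Your setup through the first variation is sound and genuinely different from the paper's: the paper never invokes Warnock's formula, but instead writes the expected squared discrepancy as $\int \mathrm{var}(X(x,y))/4\,dx\,dy$ for a sum of two Bernoulli variables, which at $p=1/2$ collapses to $\tfrac{1}{72}+2\int (f_1xy-f_1^2)$. Your $E(g)$ is correct and agrees with that expression at $p=1/2$ (since $D=\int A_1(ts-A_1)$ and the $B,C$ terms become constants there), and your formula $\delta E=\int_0^a\eta(x)K(x)\,dx$, including the order swap in $\delta D$, checks out. One remark: the symmetry constraint buys you nothing, because $K(x)$ has the form $\Lambda+\Phi(x,g(x))$ with $\Phi(x_0,y_0)$ symmetric in its arguments, so $K(g(u))=\Phi(g(u),u)=\Phi(u,g(u))=K(u)$ identically and your condition $K+K\circ g=0$ is just $K\equiv 0$. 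What you should instead impose (or sidestep, as the paper does, by transporting mass between two boundary points) is the area constraint $\int_0^a\eta=0$; without it you are deriving stationarity in $p$ as well, a strictly stronger condition than the theorem, which the numerics apply at each fixed $p$.

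The genuine gap is in steps (ii) and (iii). The condition $K\equiv\mathrm{const}$ says that the functional gradient, which contains the double integral $\iint_{t\ge x,\,s\ge g(x)}(ts-2A_1(t,s))\,dt\,ds$, is constant along the curve --- this is exactly the paper's Lemma. The theorem's equation is not that statement but its tangential derivative, $\frac{d}{dx}K(x)=0$: only upon differentiating does the double integral collapse to the line integrals $\int_{g(x)}^{1}A_1(x,s)\,ds=xg(x)(1-g(x))+\int_{g(x)}^{a}(1-y)g(y)\,dy$ and its reflected partner, which is where the $\int(1-y)g(y)\,dy$ terms in the display come from (indeed, the two bracketed expressions in the theorem are $-2\,\partial_1\Phi$ and $-2\,\partial_2\Phi$ evaluated at $(x,g(x))$). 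Your step (ii) cannot work as stated: $\iint_{t\ge x,\,s\ge g(x)}A_1\,dt\,ds=\iint_{\Omega}(1-\max(u,x))(1-\max(v,g(x)))\,du\,dv$ is an irreducibly two-dimensional quantity, not $\int_{g(x)}^a(1-y)g(y)\,dy$. And the factorization in step (iii) equates a zeroth-order quantity ($K+K\circ g$, containing $\iint A_1$) with a multiple of its derivative along the curve (the displayed $G+g'\cdot G\circ g$); no such algebraic identity can hold for general $g$, and the two conditions are not equivalent. The repair is short: replace (ii)--(iii) with ``differentiate $K(x)=\mathrm{const}$ with respect to $x$,'' after which evaluating the two resulting line integrals via the involution $g=g^{-1}$ and $g\equiv 0$ on $[a,1]$ is precisely the paper's final subsection.
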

We use the integral equation to obtain the best numerical approximation to 
the solution $g(x)$ in the space of polynomials of 
degree less than or equal to $10$ using the following procedure.
For a fixed value $\alpha$, the 
integral equation is enforced at $200$ Gauss-Legendre nodes on the
interval $[0,\alpha]$ along with the constraints
$g(0) = \alpha$ and $g(\alpha)=0$. 
The integrals in the equation are computed
using adaptive Gaussian quadrature with error less than $10^{-9}$.
This results in a non-linear least square problem which is solved using 
a Gauss-Newton iterative scheme. 
The solution obtained by this procedure has two issues -- a) the graph
of $g$ is not symmetric and, b) $\int_{0}^{\alpha} g(x) dx \neq p$.
Let $x_{0}$ denote the intersection of $g$ with the line $y = x$.
We use the following symmetrized version of $g$, 
$$
g_{\text{sym}}(x) = \begin{cases}
g(x) &\quad x \leq x_{0} \\
g^{-1}(x) & \quad x>x_{0} \, ,
\end{cases}  
$$
For $p\leq 0.5$, the computed approximation of $g(x)$ was not monotonically 
decreasing on $[0,\delta]$ with $\delta<0.08$. 
In this case, we set $g(x)=y_{\text{max}}$ for $x\leq x_{\text{max}}$, where $y_{\text{max}}$
is the maximum value of $g$ attained at $x=x_{\text{max}}$.
We then vary the parameter $\alpha$ and use bisection
to  ensure the integral of the area under the curve $g(x)$ is $p$ with a tolerance of $10^{-6}$.
The expected $\mathcal{L}^{2}-$ discrepancy of the
sampling procedure arising from the partitions formed by 
the numerical solution is then computed using adaptive Gaussian quadrature
with error less than $10^{-6}$. 
\begin{center}
\begin{figure}[h!]
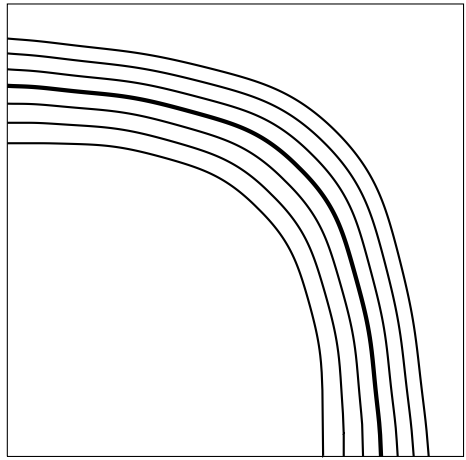

\label{sym}

\caption{Numerically constructed solution: 
for areas 0.423, 0.473, 0.523, 0.573 (bold), 0.623, 0.673, and 0.723. 
The absolute minimum is attained for the bold curve.} \label{fig:cases}
\end{figure}
\end{center}
\vspace{-10pt}
The main implication of the numerical computation is that the optimal 
shape (within the subclass considered) 
does not partition the unit square into two domains of equal area (which may
be an intuitive assumption) and instead opts for 0.573 -- 0.427 split. We believe this
to be quite interesting since it suggests the possibility of optimal partitions in
high dimensions to be fairly asymmetric for a small number of points. Conversely,
as the number of points becomes large, one would assume that most sets in the
partition end up having comparable measure.

\subsection{Open problems.}
It is rather remarkable that the case $d=2, n=2$ 
can be solved essentially completely; we do
not expect any other cases $d=2, n\geq 3$ to be equally accessible. 
The bigger challenge is to understand whether it is possible to construct any 
reasonable rules for partitioning the space that yield quantifiably better 
outcomes than Monte-Carlo. 
It is not even clear how to even pick three or more points from 
$[0,1]^2$. Furthermore, 
as shown above, there is no reason to assume that the optimal distribution will
partition the domain into sets of equal measure.

\section{proof of the theorem}
The proof consists of three parts: first we use integration by parts and some identities to reformulate the variational problem; the second step
uses an Euler-Lagrange type of approach to derive a geometric property that is 
satisfied by the extremal partition and the final part is to use this
geometric property on an infinitesimal scale to derive an integral equation.
\subsection{Preliminaries} 
We will only prove the result for $p=1/2$, 
the general case proceeds in the same manner,  
but with more tedious algebraic computations that are not insightful. 
Let $g$ be a monotonically decreasing function which bounds a minimizing domain
$\Omega$.
Let $p_{1}$ and $p_{2}$ be uniform random variables on $\Omega$ and 
$[0,1]^{2} \setminus \Omega$ respectively, 
and let $\mathcal{B}(q)$ denote a Bernoulli $\left\{0,1\right\}-$variable 
that assumes the value 1 with likelihood $q$. 
We wish to minimize the quantity
$$ \mathbb{E}_{} \int_{[0,1]^2}{ \left| \frac{X(x,y)}{2} - xy\right|^2 dxdy} 
=   
\int_{[0,1]^2}{ \mathbb{E}_{} \left| \frac{X(x,y)}{2} - xy \right|^2 dxdy},$$
where $X(x,y)$ is a random variable given by
$$ X(x,y) := \# \left\{ \left\{p_1,p_2\right\}: p_i \in [0,x] \times [0,y] \right\}.$$
We use a trick from \cite{stein} and simplify the object by exchanging the 
order of integration: let us therefore consider the expectation of the integrand 
for a fixed point $(x,y) \in [0,1]^2$. 
Then $$ \mathbb{E}  \frac{X(x,y)}{2} = x y.$$
This implies 
$$ \mathbb{E}  \left| \frac{X(x,y)}{2} - xy \right|^2  =  
\mathbb{E} \left| \frac{X(x,y)}{2} -   
\mathbb{E}\frac{X(x,y)}{2}  \right|^2 = 
\mbox{var}\left(\frac{X(x,y)}{2} \right) = \frac{1}{4} \mbox{var}(X(x,y)).$$
$X(x,y)$ is the sum of two independent Bernoulli random variables 
$$ 
X(x,y) = \mathcal{B}(2f_{1}) + \mathcal{B}(2f_{2}) \, ,
$$
where
\begin{align*}
f_1(x,y) &= \left|\Omega \cap  \left([0,x] \times [0,y]\right) \right| \\
f_2(x,y) &= \left|\left([0,1]^{2}\setminus\Omega \right) \cap  
\left([0,x] \times [0,y]\right) \right|.
\end{align*}
Since the variance of the sum of two independent random variables is
the sum of their variances, and
$$ \mbox{var}(\mathcal{B}(q)) = q(1-q) \, ,$$
we get
\begin{align*}
4\cdot \mathbb{E}_{} \left| \frac{X(x,y)}{2} - xy \right|^2  
&= \mbox{var}(\mathcal{B}(  2f_{1})) + \mbox{var}( \mathcal{B}(  2f_{2} ))  \\
&= 2f_1(1-2f_1) + 2f_2(1-2f_2) \\
&= 2f_1 - 4 f_1^2 + 2f_2 - 4f_2^2 \, .
\end{align*}
Furthermore, 
$$ \mathbb{E} \frac{X(x,y)}{2} = xy \implies f_{1}(x,y)+f_{2}(x,y) = xy \, ,$$
and thus
$$ 
4\cdot \mathbb{E}_{} \left| \frac{X(x,y)}{2} - xy \right|^2  
= 
2xy - 4x^2 y^2 + 8f_{1}(x,y)xy - 8f_{1}(x,y)^2 \, .
$$
This means we wish to minimize
\begin{align*}
\mathbb{E}_{} \int_{[0,1]^2}{ \left| \frac{X(x,y)}{2} 
- xy\right|^2 dxdy}  &= 
\int_{[0,1]^2}{ \mathbb{E}_{} \left| \frac{X(x,y)}{2} - xy \right|^2 dxdy} \\
&= \frac{1}{2}\int_{[0,1]^2}{ xy - 2 x^2 y^2 + 4 f_1(x,y) x y - 4f_1(x,y)^2 dx dy } \\
&= \frac{1}{72} + 2\int_{[0,1]^2}{f_1(x,y) x y - f_1(x,y)^2 dx dy }
\end{align*}
We will henceforth work with this reformulation of the problem.

\subsection{Geometric property of extremizers.}
The extremizing function $g$ has to satisfy a very particular geometric 
condition (see Figure \ref{fig:property}),
which we can use to derive an integral equation. We start by stating and 
proving the geometric property.

\begin{center}
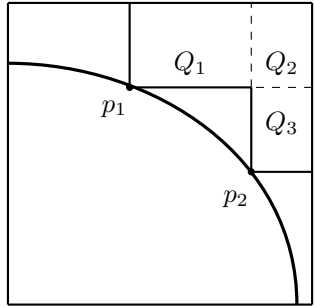
\begin{figure}[h!]
\label{sketch}
\begin{tikzpicture}[scale=4]
\draw [thick] (0,0) -- (1,0);
\draw [thick] (1,1) -- (1,0);
\draw [thick] (0,0) -- (0,1);
\draw [thick] (0,1) -- (1,1);
\draw[very thick] (0,0.8) to [out=0,in=90] (0.95,0);
\filldraw (0.4, 0.72) circle (0.01cm);
\draw [thick] (0.4, 0.72) -- (0.8, 0.72);
\draw [dashed] (1, 0.72) -- (0.8, 0.72);
\filldraw (0.8, 0.44) circle (0.01cm);
\node at (0.35, 0.65) {$p_1$};
\draw [thick] (0.8, 0.44) -- (0.8, 0.72);
\draw [dashed] (0.8, 0.72) -- (0.8, 1);
\draw [thick] (0.4, 0.72) -- (0.4, 1);
\node at (0.75, 0.35) {$p_2$};
\draw [thick] (1, 0.44) -- (0.8, 0.44);
\node at (0.6, 0.8) {$Q_1$};
\node at (0.9, 0.8) {$Q_2$};
\node at (0.9, 0.6) {$Q_3$};
\end{tikzpicture}
\caption{The geometric property satisfied by the extremizer.} \label{fig:property}
\end{figure}
\end{center}
\begin{lemma}
For any two points $p_1,p_2$ on the curve, we always have
$$ \int_{Q_1}{ (2f_{1}(x,y) - xy) dx dy} =  \int_{Q_3}{  ( 2f_{1}(x,y) - xy) dx dy}.$$
\end{lemma}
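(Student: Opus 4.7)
The plan is to obtain this identity as the first-order necessary condition at the minimizer, via an area-preserving two-point variation of $g$.

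Pick arbitrary points $p_1 = (x_1, y_1)$ and $p_2 = (x_2, y_2)$ on the curve with $x_1 < x_2$ (so $y_1 > y_2$ by monotonicity). Let $h_1, h_2$ be non-negative smooth bumps of unit mass localized near $x_1$ and $x_2$ respectively, and consider the perturbed family $g_\epsilon := g + \epsilon(h_1 - h_2)$. Since $\int (h_1 - h_2)\,du = 0$, the area constraint $\int g_\epsilon\,du = p$ is preserved; for $\epsilon$ small and bumps narrow enough, $g_\epsilon$ remains monotonically decreasing.

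Writing $f_1(x, y) = \int_0^x \min(y, g(u))\,du$ and linearizing, in the limit of vanishing bump widths one obtains
$$\delta f_1(x, y) \;=\; \epsilon\,\mathbf{1}_{\{x \geq x_1,\, y \geq y_1\}} \;-\; \epsilon\,\mathbf{1}_{\{x \geq x_2,\, y \geq y_2\}} \;+\; o(\epsilon),$$
the error being supported on a vanishing-measure strip near the lines $y = y_i$. Substituting into the reformulation
$$\mathcal{E}[g] \;=\; \tfrac{1}{72} \;+\; 2\int_{[0,1]^2}\bigl(f_1(x,y)\,xy - f_1(x,y)^2\bigr)\,dx\,dy$$
derived in the Preliminaries yields
$$\delta \mathcal{E} \;=\; 2\!\int_{[0,1]^2}\! (xy - 2f_1)\,\delta f_1\,dx\,dy \;=\; 2\epsilon\!\left[\int_{[x_1,1]\times[y_1,1]}\!\!\!(xy - 2f_1) \;-\; \int_{[x_2,1]\times[y_2,1]}\!\!\!(xy - 2f_1)\right].$$
Extremality of $g$ forces $\delta \mathcal{E} = 0$. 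Recognizing the decompositions $[x_1,1]\times[y_1,1] = Q_1 \cup Q_2$ and $[x_2,1]\times[y_2,1] = Q_2 \cup Q_3$ with shared block $Q_2 = [x_2,1]\times[y_1,1]$, the $Q_2$ integrals cancel and one is left with $\int_{Q_1}(2f_1 - xy)\,dx\,dy = \int_{Q_3}(2f_1 - xy)\,dx\,dy$, which is the claim.

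Two technicalities deserve comment. First, the linearization of $f_1$ must be justified by bounding the contribution from $(x, y)$ in the narrow strip of thickness equal to the bump width around $y = y_i$; since $(xy - 2f_1)$ is uniformly bounded and the strip has area tending to zero, this contribution is $o(\epsilon)$. Second, although $h_1 - h_2$ need not respect the symmetry of $g$ about $y = x$, this is harmless: $\mathcal{E}$ is itself invariant under $(x, y) \leftrightarrow (y, x)$, so at a symmetric critical point the antisymmetric component of any admissible variation contributes zero to first order. Hence $\delta \mathcal{E} = 0$ holds for \emph{all} area-preserving perturbations and the derivation above applies without further symmetrization. The main obstacle is really just being careful with (i) to reduce $\delta f_1$ to the clean indicator expression.
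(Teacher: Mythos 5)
Your proof is correct and follows essentially the same route as the paper's: an area-preserving transfer of an infinitesimal amount of mass between the two points on the curve, linearization of $f_1$ to indicators of the upper-right quadrants based at $p_1$ and $p_2$, and cancellation of the common block $Q_2$ in the first-order optimality condition. You are somewhat more explicit about the linearization error and about why non-symmetric variations are admissible, but the underlying argument is identical to the paper's.
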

\begin{proof} The proof proceeds with a variational argument. Let us fix two points $p_1$ and $p_2$ on the curve and try to understand the effect of taking
a tiny $\varepsilon$ unit of area and moving it from $p_1$ to $p_2$. Recall that we are trying to maximize
$$ \int_{[0,1]^2}{ f_{1}(x,y)(f_{1}(x,y)-xy)  dx dy }.$$
If we first remove an $\varepsilon$ unit of area, the change in the integral is to first order
$$  -\int_{Q_1 \cup Q_2}{ f_1(f_1-xy)  dx dy }  + \int_{Q_1 \cup Q_2}{ 
(f_1-\varepsilon)(f_1-\varepsilon-xy)  dx dy } 
\sim \varepsilon\int_{Q_1 \cup Q_2}{ (-2f_1 + xy) dxdy}.$$
We then add this to the point $p_2$ and get a change of order
$$ \int_{Q_2 \cup Q_3}{ (f_1+\varepsilon)(f_1+\varepsilon-xy)  dx dy }  - 
\int_{Q_2 \cup Q_3}{ f_1(f_1-xy)  dx dy }  \sim \varepsilon 
\int_{Q_2 \cup Q_3}{(2f_1 - xy) dx dy }.$$
In the usual manner of these arguments, we conclude that these two quantities have to add up to 0 if we are dealing with a maximizer (otherwise we find something larger, or, if it is negative, we start
moving area from $p_2$ to $p_1$); this means we need
$$ \int_{Q_1 \cup Q_2}{ (-2f_1 + xy) dxdy} + \int_{Q_2 \cup Q_3}{(2f_1 - xy) dx dy } = 0$$
which proves the lemma.
\end{proof}

\subsection{Derivation of the integral equation.} 
Since $g$ is a monotonically decreasing function, a classical consequence is 
that $g$ is differentiable almost everywhere -- we will now use the geometric Lemma as $p_1 \rightarrow p_2$ (assuming that
$g$ is differentiable in $p_2$). Then $Q_1$ and $Q_3$ degenerate to very thin 
rectangles.
We need to understand how
$Q_1$ and $Q_3$ degenerate.
Let $\ell_{1}$ and $\ell_{3}$ denote the limiting lines corresponding
to the degenerate rectangles $Q_{1}$ and $Q_{3}$ as $p_1 \to p_2 = (x,g(x))$. 
The degenerate sides of the rectangles $Q_{1}$ and $Q_{3}$ 
then scale as $|\cos{\alpha}|$ and $|\sin{\alpha}|$ respectively, where
$$ \tan{(\alpha)} = g'(x) \, .$$
Using the identities 
$$ \cos{\arctan{\theta}} = \frac{1}{\sqrt{1+\theta^2}} 
\quad \mbox{and} \quad \sin{\arctan{\theta}} = \frac{\theta}{\sqrt{1+\theta^2}}$$
we deduce that the ratio of the degenerate sides of $Q_{1}$ and $Q_{3}$ 
in the limit is $1:|g'(x)|$.
Using the Lemma, and taking the limit $p_1\to p_2$, we get
\begin{align*}
\int_{\ell_{1}} (2f_{1}(x,y)-xy) dx dy &= -g'(x) \int_{\ell_{3}} 
(2f_{1}(x,y)-xy) dx dy
\end{align*}
The integral along $\ell_{1}$ is given by
$$\int_{\ell_1}{f_1(x,y)dxdy} =  \int_{g(x)}^{1}{\left(x g(x) + \int_{g(x)}^{y}{g^{-1}(z) dz}\right) dy} = x g(x) (1-g(x)) + \int_{g(x)}^{1}{\left( \int_{g(x)}^{y}{g^{-1}(z) dz}\right) dy} .$$
Using $g = g^{-1}$ and a change of variables, this simplifies further to
$$
 x g(x) (1-g(x)) + \int_{g(x)}^{1}{\left( \int_{g(x)}^{y}{g^{}(z) dz}\right) dy} =  x g(x) (1-g(x))  + \int_{g(x)}^{1}{ (1-y)g^{}(y)dy}.
$$
It is easy to see that along a line
$$ \int_{\ell_1}{\frac{xy}{2}dxdy} = \int_{g(x)}^{1}{x y dy} = 
x \frac{1-g(x)^2}{4}.$$
By the same token we have
\begin{align*}
\int_{\ell_3}{f_1(x,y)dxdy} =  \int_{x}^{1}{\left(x g(x) + \int_{x}^{y}{g^{}(z) dz}\right) dy} &= x g(x) (1-x) + \int_{x}^{1}{\left( \int_{x}^{y}{g(z) dz}\right) dy} \\
&= x g(x) (1-x) + \int_{x}^{1}{ (1-y)g(y)  dy}
\end{align*}
and
$$ \int_{\ell_3}{\frac{x y}{2} dxdy} = \int_{x}^{1}{z g(x) dz} = g(x)\frac{1-x^2}{4}.$$
Altogether, we have
\begin{align*}
 & x \left(g(x)-\frac{1}{4}\right) - 3\frac{xg(x)^2}{4}  
 + \int_{g(x)}^{1}{ (1-y)g^{}(y)dy}  \\
&+ g'(x) \left( g(x) \left(x-\frac{1}{4}\right) - \frac{3x^2g(x)}{4}  + \int_{x}^{1}{ (1-y)g(y)  dy} \right) = 0,
\end{align*}
which is the equation in Theorem \ref{thm1} for the case $p=1/2$.
The general case follows in a similar manner.

\end{document}